\newtheorem{obs} [subsection]{Remark}
\newtheorem{prop}[subsection]{Proposition}
\newtheorem{teor}[subsection]{Theorem}
\newtheorem{lema}[subsection]{Lemma}
\numberwithin{equation}{section}
\begin{document}
\selectlanguage{english}
\frenchspacing

\title{A note on the action of Hecke groups on subsets of quadratic fields}
\author{Mircea Cimpoea\c s}
\date{}

\maketitle

\begin{abstract}
We study the action of the groups $H(\lambda)$ generated by the linear fractional transformations 
$x:z\mapsto -\frac{1}{z} \text{ and }w:z\mapsto z+\lambda$, where $\lambda$ is a positive integer, 
on the subsets $\mathbb Q^*(\sqrt{n})=\{\frac{a+\sqrt n}{c}\;|\;a,b=\frac{a^2-n}{c},c\in\mathbb Z\}$,
where $n$ is a square-free integer. We prove that this action has a finite number of orbits if and
only if $\lambda=1$ or $\lambda=2$, and we give an upper bound for the number of orbits for $\lambda=2$.

\noindent \textbf{Keywords:} Quadratic field, Hecke groups, Orbit

\noindent \textbf{2020 Mathematics Subject Classification:} 05A18; 05E18; 11A25; 11R11; 20F05
\end{abstract}

\section*{Introduction}

E.\ Hecke introduced in \cite{hecke} the groups $H(\lambda)$ generated by two linear fractional transformations
$x:z\mapsto -\frac{1}{z} \text{ and }w:z\mapsto z+\lambda$, 
where $\lambda$ is positive real number. He further showed that $H(\lambda)$ is discrete if and only if 
$\lambda=\lambda_k=2\cos \left(\frac{\pi}{k}\right)$, $k\geq 3$ an integer, or $\lambda\geq 2$.
If $\lambda=\lambda_3=1$, then $H(1)=PSL(2,\mathbb Z)$ is the modular group
which consists in all the transformations $z\mapsto \frac{az+b}{cz+d}$ with $a,b,c,d\in\mathbb Z$ and $ac-bd=1$.

The actions of the modular group on many discrete and non-discrete structures play significant roles
in different branches of mathematics. Among these structures upon which the modular group acts are subsets 
of quadratic number fields. In \cite{mush}, Q. Mushtaq studied the action of $PSL(2,\mathbb Z)$ on the subset
$$\mathbb Q^*(\sqrt{n})=\left\{\frac{a+\sqrt n}{c}\;|\;a,b=\frac{a^2-n}{c},c\in\mathbb Z\right\},$$
where $n\geq 2$ is a square-free integer. Subsequent works by several authors considered properties emerging from this action; 
see for instance \cite{mush2} and \cite{malik}. Recently, M.\ Aslam and A.\ Deajim \cite{aslam} studied the action of $PSL(2,\mathbb Z)$
on $\mathbb Q^*(\sqrt{n})$, when $n$ is a negative squre-free integer, and give a formula for the number of orbits of
this action.

The aim of our paper is to study the action of the groups $H(\lambda)$, where $\lambda\geq 1$ is an integer, on the subsets 
$\mathbb Q^*(\sqrt n)$,
where $n$ is a square-free integer. We prove that the action of $H(\lambda)$ on 
$\mathbb Q^*(\sqrt n)$ has a finite number of orbits if and only if $\lambda\in \{1,2\}$;
see Theorem $1.4$. For $\lambda=2$ we give an upper bound for the number of orbits; see Theorem $1.5$.

\newpage
\section{Main results}

We fix $n$ a square-free integer and we consider the sets:
\begin{align*}
& \mathbb Q^*(\sqrt{n})=\left\{\alpha=\frac{a+\sqrt{n}}{c}\;:\;a,b=\frac{a^2-n}{c},c\in\mathbb Z \right\} \subset \mathbb Q(\sqrt{n}) \\
& A(n)=\left\{(a,b,c)\in\mathbb Z^3\;:\;bc=a^2-n \right\}.
\end{align*}
The map $\frac{a+\sqrt{n}}{c}\mapsto (a,b,c)$ gives an $1$-to-$1$ correspondence between $\mathbb Q^*(\sqrt{n})$ and $A(n)$.

\begin{prop}\label{p1}
For any integer $\lambda\geq 1$, the group $H(\lambda)$ acts on $\mathbb Q^*(\sqrt{n})$, hence on $A(n)$.
\end{prop}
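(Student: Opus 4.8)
The plan is to exploit the fact, recalled in the introduction, that $H(\lambda)$ is generated by the two transformations $x\colon z\mapsto -1/z$ and $w\colon z\mapsto z+\lambda$. Since an action is determined by the images of a generating set, it suffices to verify that each generator and its inverse map $\mathbb Q^*(\sqrt n)$ bijectively onto itself. As $x$ is an involution ($x^2=\mathrm{id}$) we have $x^{-1}=x$, and the inverse of $w$ is the shift $z\mapsto z-\lambda$, which has the same shape as $w$; so it is in fact enough to check closure under the shift $w_\mu\colon z\mapsto z+\mu$ for an arbitrary integer $\mu$ and under $x$.

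First I would record how each generator acts on triples through the stated correspondence $\frac{a+\sqrt n}{c}\leftrightarrow(a,b,c)$. For the shift, a direct computation gives
\[
 w_\mu\Bigl(\tfrac{a+\sqrt n}{c}\Bigr)=\frac{(a+\mu c)+\sqrt n}{c},
\]
so the candidate image triple is $(a+\mu c,\,b+2\mu a+\mu^2 c,\,c)$. I would then confirm that the defining relation $bc=a^2-n$ is preserved by expanding $(a+\mu c)^2-n=(a^2-n)+2\mu a c+\mu^2 c^2=c\,(b+2\mu a+\mu^2 c)$, which simultaneously shows the new middle entry is an integer and that the triple lies in $A(n)$.

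For the inversion, rationalising the denominator gives
\[
 x\Bigl(\tfrac{a+\sqrt n}{c}\Bigr)=\frac{-c}{a+\sqrt n}=\frac{-(a-\sqrt n)}{b}=\frac{-a+\sqrt n}{b},
\]
using $a^2-n=bc$, so the image triple is $(-a,\,c,\,b)$, and the relation is trivially preserved since $(-a)^2-n=a^2-n=cb$. Transporting these formulas through the $1$-to-$1$ correspondence between $\mathbb Q^*(\sqrt n)$ and $A(n)$ then yields the induced action on $A(n)$, given on generators by $(a,b,c)\mapsto(a+\lambda c,\,b+2\lambda a+\lambda^2 c,\,c)$ and $(a,b,c)\mapsto(-a,c,b)$.

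The substance of the argument lies entirely in these two closure computations, so no serious obstacle is expected; the only points requiring a word of care are well-definedness issues. I would note that $\sqrt n$ is irrational, so that the correspondence is genuinely injective and comparison of rational and irrational parts is legitimate, that $c\neq 0$ for elements of $\mathbb Q^*(\sqrt n)$, and that in the inversion step the new denominator $b$ is nonzero — equivalently $a^2\neq n$ — which holds since $n$ is square-free and not a perfect square. Granting these, closure under both generators, together with the fact that they generate $H(\lambda)$, establishes the claimed action.
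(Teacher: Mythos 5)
Your proposal is correct and follows essentially the same route as the paper: verify closure under the two generators $x$ and the shift via the identities $(a,b,c)\mapsto(-a,c,b)$ and $(a,b,c)\mapsto(a+\mu c,\,b+2\mu a+\mu^2 c,\,c)$, which are exactly the paper's computations. Your additional remarks on inverses ($x^2=\mathrm{id}$, and the inverse shift being covered by allowing arbitrary integer $\mu$) and on well-definedness ($b\neq 0$ since $n$ is square-free) tidy up points the paper leaves implicit, but do not change the argument.
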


\begin{proof}
Since $H(\lambda)$ is generated by $x:z\mapsto -\frac{1}{z}$ and $w_{\lambda}:z\mapsto z+\lambda$, it is enough to show that for
any $\alpha=\frac{a+\sqrt{n}}{c}\in \mathbb Q^*(\sqrt{n})$, it holds that $x(\alpha), w_{\lambda}(\alpha)\in \mathbb Q^*(\sqrt{n})$.
Indeed, we have
\begin{equation}\label{11}
x(\alpha)=-\frac{1}{\alpha}=-\frac{c}{a+\sqrt{n}}=-\frac{c(a-\sqrt{n})}{a^2-n}=\frac{-a+\sqrt{n}}{b}.
\end{equation}
On the other hand
\begin{equation}\label{12}
\frac{(-a)^2-n}{b} = \frac{a^2-n}{b}=c \in\mathbb Z.
\end{equation}
From \eqref{11} and \eqref{12} it follows that $x(\alpha)\in Q^*(\sqrt{n})$.

Also, we have that 
\begin{equation}\label{13}
w_{\lambda}(\alpha)=\alpha+\lambda = \frac{a+\lambda c + \sqrt{n}}{c}.
\end{equation}
On the other hand 
\begin{equation}\label{14}
\frac{(a+\lambda c)^2-n}{c} = \frac{a^2+2\lambda ac + \lambda^2c^2 - n}{c} = \frac{bc + 2\lambda ac +\lambda^2 c^2}{c} =
2\lambda a + b + \lambda^2 c \in \mathbb Z.
\end{equation}
From \eqref{13} and \eqref{14} it follows that $w_{\lambda}(\alpha)\in Q^*(\sqrt{n})$.
\end{proof}

From the proof of Proposition \ref{p1}, if follows that 
\begin{equation}\label{15}
x(a,b,c)=(-a,c,b)\text{ and }y(a,b,c)=(-a-\lambda c, c, 2\lambda a + b +\lambda^2 c).
\end{equation}

\begin{lema}
Let $\varepsilon:=\frac{5-\sqrt 7}{3}\approx 0.78$.
Let $(a,b,c)\in A(n)$ such that $c\geq \varepsilon |a|$ and $a^2-n>0$.
If $k$ is an integer with $|k|\geq 3$ and $(a',b',c'):=(xw_{k})(a,b,c)$ then $|a'|>|a|$ and $|c'|>\varepsilon |a'|$.
\end{lema}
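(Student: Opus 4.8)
The plan is to make the action completely explicit via \eqref{15} and then reduce everything to two scalar inequalities. Writing $(a',b',c')=(xw_k)(a,b,c)$, the computation in \eqref{15} (with $k$ in place of $\lambda$) gives $a'=-a-kc$, $b'=c$ and $c'=2ka+b+k^2c$, so that $|a'|=|a+kc|$. Before estimating I would reduce to the case $k\geq 3$: the substitution $k\mapsto -k$, $a\mapsto -a$ preserves the hypotheses (since $a^2$, $b$ and $c$ are unchanged) and leaves both $|a'|$ and $c'$ unchanged while only flipping the sign of $a'$, so the two desired conclusions are insensitive to it. I would also record at the outset that $bc=a^2-n>0$ together with $c\geq\varepsilon|a|\geq 0$ forces $c>0$, hence $b>0$.

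For the first inequality $|a'|>|a|$, the key observation is that $\varepsilon>\tfrac23\geq\tfrac2k$, so $kc\geq\varepsilon k|a|>2|a|$; the reverse triangle inequality then gives $|a'|=|a+kc|\geq kc-|a|>|a|$ (the case $a=0$ being immediate). This in turn yields $(a')^2>a^2>n$, and since $(a',b',c')\in A(n)$ with $b'=c>0$ we get $c'=\frac{(a')^2-n}{c}>0$; thus $|c'|=c'$ and it remains to prove $c'>\varepsilon|a'|$.

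For the second inequality I would split on the sign of $a$. When $a\geq 0$ one has $a+kc>0$ and
\[
c'-\varepsilon|a'|=(2k-\varepsilon)a+b+(k^2-\varepsilon k)c,
\]
which is a sum of nonnegative terms plus $b>0$, hence positive. The genuine case is $a<0$, where $kc\geq 3\varepsilon|a|>|a|$ gives $|a'|=kc-|a|$ and
\[
c'-\varepsilon|a'|=b+(k^2-\varepsilon k)c-(2k-\varepsilon)|a|.
\]
Here I would invoke $c\geq\varepsilon|a|$ (legitimate since $k^2-\varepsilon k>0$) to bound the right-hand side below by $b+f(k)|a|$, where $f(k)=\varepsilon k^2-\varepsilon^2 k-2k+\varepsilon$.

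The main obstacle, and the reason for the peculiar constant, is to verify $f(k)\geq 0$ for every integer $k\geq 3$. This is exactly where the value of $\varepsilon$ is forced: one computes $f(3)=-(3\varepsilon^2-10\varepsilon+6)$, and $\varepsilon=\frac{5-\sqrt7}{3}$ is precisely the smaller root of $3t^2-10t+6$, so that $f(3)=0$ rather than a negative number. Since $f$ is an upward parabola in $k$ with vertex at $\frac{\varepsilon^2+2}{2\varepsilon}<3$, it is increasing on $[3,\infty)$, whence $f(k)\geq f(3)=0$ for all $k\geq 3$. Consequently $c'-\varepsilon|a'|\geq b+f(k)|a|\geq b>0$, which finishes the argument. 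I expect the only delicate points to be the sign bookkeeping in the case $a<0$ and the verification that this threshold value of $\varepsilon$ makes $f(3)$ vanish exactly.
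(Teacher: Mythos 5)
Your proposal is correct and follows essentially the same route as the paper's proof: the same explicit formulas $a'=-a-kc$, $b'=c$, $c'=b+2ka+k^2c$, the same sign-based case split, and the same key quadratic $\varepsilon k^2-(\varepsilon^2+2)k+\varepsilon$, whose nonnegativity for $k\geq 3$ is exactly the defining property of $\varepsilon$ as the smaller root of $3t^2-10t+6$ (the paper checks this by factoring it as $\varepsilon\left(|k|-3\right)\left(|k|-\tfrac{1}{3}\right)$, you by monotonicity of the parabola past its vertex). Your normalization to $k\geq 3$ via the symmetry $(a,k)\mapsto(-a,-k)$ and your explicit treatment of $a=0$ are minor presentational improvements over the paper, whose two cases $ak>0$ and $ak<0$ silently omit $a=0$, but the substance of the argument is identical.
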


\begin{proof}

Since $c>0$ and $bc=a^2-n>0$, we have $b>0$. From \eqref{15}, it follows that 
$$a'=-a-kc,\;b'=c,\;c'=b+2ka+k^2c.$$

Since $|kc|=|k|c \geq 3\varepsilon |a| > |a|$, it follows that 
$$|a'|\geq |k|c-|a| \geq (\varepsilon |k|-1)|a| > |a|.$$ 
In order to prove $|c'|>\varepsilon |a'|$, we consider two cases:
\begin{enumerate}
 \item[(i)] $ak > 0$. We have $|a'|=|a|+|k|c$. Hence:
            $$c'=b+2ka+k^2c>2|k||a|+k^2c=|k|(2a+|k|c)>|k|(|a|+|k|c)=|k||a'|>\varepsilon |a'|.$$
 \item[(ii)] $ak<0$. We have $|a'|= -|a|+|k|c$. On the other hand
            $$c'=b+2ka+k^2c>2ka+k^2c=-2|k||a|+|k|^2c.$$
            It follows that
            \begin{align*}                       
            & c'-\varepsilon |a'| > -2|k|a+|k|^2c + \varepsilon |a| +\varepsilon |k|c = (|k|^2-\varepsilon |k|)c+(-2|k|+\varepsilon)|a| \geq \\
            & (\varepsilon |k|^2-\varepsilon^2 |k| -2|k|+\varepsilon)|a| = \varepsilon(|k|^2 - \frac{2+\varepsilon}{\varepsilon}|k| + 1 )|a| =
            \varepsilon|a|(k-3)(k-\frac{1}{3}).             
            \end{align*}
            Since $|k|\geq 3$, we get $c'-\varepsilon |a'|>0$, thus $c'>\varepsilon |a'|$. (We used the identity $\frac{2+\varepsilon}{\varepsilon} = \frac{10}{3}$)
\end{enumerate}
\end{proof}

Given $\alpha:=(a,b,c)\in A(n)$, we denote $||\alpha||:=|a|$. Note that $||x(\alpha)||=||\alpha||=|a|$.

\begin{lema}
The set $B(n):=\{(a,b,c)\in A(n)\;|\;|a|\leq |b|,\;|a|\leq |c|\}$ is finite.
\end{lema}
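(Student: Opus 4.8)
The plan is to show that membership in $B(n)$ forces $|a|$ to be bounded --- in fact $|a|\le |n|$ --- after which finiteness is immediate, since each admissible value of $a$ gives rise to only finitely many factorizations $bc=a^2-n$.

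First I would extract the basic size inequality from the defining relation $bc=a^2-n$ of $A(n)$ together with the two constraints $|a|\le|b|$ and $|a|\le|c|$. Multiplying the constraints gives
$$a^2\le |b|\,|c| = |bc| = |a^2-n|.$$
When $n>0$ this already pins $a$ down: $|a^2-n|\ge a^2$ forces $a^2-n\le -a^2$, i.e. $n\ge 2a^2$, so $|a|\le\sqrt{n/2}$. The difficulty is the case $n<0$ (and the borderline behaviour where $a^2\approx n$): there $|a^2-n| = a^2+|n|\ge a^2$ holds automatically, so the inequality above yields no bound on $a$ whatsoever. This is exactly the obstacle one must get past, and a purely metric estimate will not suffice.

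To resolve it uniformly I would argue by contradiction, exploiting integrality. Suppose $|a|>|n|$; note that $n$ square-free gives $n\ne 0$, hence $|n|\ge 1$ and $a\ne 0$. Then
$$|b|\,|c| = |a^2-n| \le a^2+|n| < a^2+|a| = |a|\,(|a|+1).$$
Since $|b|\ge|a|>0$, dividing gives $|c| < |a|+1$, and as $|c|$ is an integer with $|c|\ge|a|$ this squeezes $|c|=|a|$; symmetrically $|b|=|a|$. But then $|a^2-n| = |bc| = a^2$, which forces either $n=0$ (impossible, as $n$ is square-free) or $n=2a^2$ (impossible, since it would give $|a|>|n|=2a^2$, absurd for $|a|\ge1$). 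This contradiction establishes $|a|\le|n|$.

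Finally I would conclude finiteness. Only finitely many integers $a$ satisfy $|a|\le|n|$, and for each such $a$ the quantity $a^2-n$ is a fixed integer. If $a^2-n\ne 0$ it has only finitely many factorizations $a^2-n=bc$, hence finitely many admissible pairs $(b,c)$; and $a^2-n=0$ cannot occur under the constraints, since it would force $b=0$ or $c=0$, contradicting $|b|,|c|\ge|a|$ unless $a=0$, whence $n=0$. Therefore $B(n)$ is finite. The step I expect to be the real work is the integrality argument for $n<0$: the naive bound is tight in that regime, so one genuinely needs the squeeze $|b|=|c|=|a|$ together with the square-free hypothesis to eliminate the extremal configuration.
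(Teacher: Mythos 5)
Your proof is correct and rests on the same key idea as the paper's: multiplying the constraints $|a|\le|b|$, $|a|\le|c|$ against $|bc|=|a^2-n|$ and then using integrality of $b,c$ to squeeze out a bound on $|a|$ (the paper gets $|a|\le\sqrt{n/2}$ for $n>0$ and $|a|\le -n$ for $n<0$ by the same device). The only differences are organizational: you run the integrality step uniformly by contradiction for both signs of $n$ and finish by counting factorizations of $a^2-n$, whereas the paper splits into the cases $n>0$ and $n<0$ and bounds $|b|,|c|$ explicitly; both finishes are routine once $|a|$ is bounded.
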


\begin{proof}
We consider two cases:
\begin{enumerate}
 \item[(i)] $n>0$. Since $a^2-n=bc$, it follows that $|bc|=|a^2-n|\geq |a|^2$, hence $a^2\leq n$ and moreover $n-a^2\geq a^2$, 
            that is $|a|\leq \sqrt\frac{n}{2}$. It follows that 
            $$|b||c|=|a^2-n|\leq |\frac{n}{2}-n| = \frac{n}{2},$$
            hence $|b|\leq \frac{n}{2}$ and $|c|\leq \frac{n}{2}$.
 \item[(ii)] $n<0$. Since $bc=a^2-n>a^2$, $|b|>|a|$ and $|c|>|a|$ it follows that
            $$ a^2-n=|b||c|\geq (|a|+1)|a|=a^2+|a|,$$
            hence $|a|\leq -n$. Therefore, $|b||c|\leq n^2-n$, so $|b|,|c|\leq n^2-n$.
\end{enumerate}
\end{proof}

\begin{teor}
Let $\lambda\geq 1$ be an integer. 
The action of $H(\lambda)$ on $\mathbb Q^*(n)$ has a finite number of orbits if and only if $\lambda\in \{1,2\}$.
\end{teor}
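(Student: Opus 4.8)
My plan is to reduce the whole statement to the single numerical dichotomy $\tfrac{\lambda}{2}\le 1$ versus $\tfrac{\lambda}{2}>1$, i.e.\ $\lambda\in\{1,2\}$ against $\lambda\ge 3$, working throughout with the triples $(a,b,c)\in A(n)$, the generator formulas \eqref{15}, and the measure $\|(a,b,c)\|=|a|$, which is invariant under $x$.

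For the implication $\lambda\in\{1,2\}\Rightarrow$ finitely many orbits, I would show that every orbit meets the finite set $B(n)$. Given $(a,b,c)\notin B(n)$ I may, after possibly applying $x$ (which swaps $b$ and $c$ and preserves $\|\cdot\|$), assume $|a|>|c|$; note $c\neq 0$ because $a^2=n$ is impossible for a square-free non-square $n$. Choosing $t\in\mathbb Z$ so that $w_{\lambda}^{\,t}$ sends $a$ to $a'=a+t\lambda c$ with $|a'|\le\tfrac{\lambda}{2}|c|$, the hypothesis $\lambda\le 2$ forces $|a'|\le|c|<|a|$, a strict decrease of $\|\cdot\|$. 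As $\|\cdot\|$ is a non-negative integer, iterating terminates at a triple in $B(n)$, and the finiteness of $B(n)$ yields finitely many orbits. This is exactly the step that collapses for $\lambda\ge 3$, where $\tfrac{\lambda}{2}|c|$ can exceed $|a|$ and the descent fails.

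For $\lambda\ge 3\Rightarrow$ infinitely many orbits, I would exhibit infinitely many elements that are $\|\cdot\|$-minimal in their orbit, with $\|\cdot\|\to\infty$; since each orbit has a well-defined minimal value of $\|\cdot\|\in\mathbb N$, minimal elements with distinct values lie in distinct orbits. Set $R=\{(a,b,c)\in A(n):c\ge\varepsilon|a|,\ a^2-n>0\}$ (which forces $b,c>0$) and $\mathcal D=R\cap x(R)=\{(a,b,c):b,c\ge\varepsilon|a|,\ a^2-n>0\}$. Because $\lambda\ge 3$, every nonzero power $w_{\lambda}^{\,k}=w_{\lambda k}$ satisfies $|\lambda k|\ge 3$, so the ascent lemma (on $xw_{k}$ with $|k|\ge 3$) applies to each block $xw_{\lambda}^{\,k}$, mapping $R$ into $R$ while strictly raising $\|\cdot\|$; moreover $\lambda|c|\ge 3\varepsilon|a|>2|a|$ gives $\|w_{\lambda}^{\,k}\beta\|>\|\beta\|$ for any $\beta\in R$ and $k\neq 0$, while $x$ fixes $\|\cdot\|$ and preserves $\mathcal D$. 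Using the free-product normal form $H(\lambda)\cong\langle x\rangle*\langle w_{\lambda}\rangle$ (reduced words alternate $x$ with nonzero powers of $w_{\lambda}$), I would read a word right-to-left, group it into blocks $xw_{\lambda}^{\,k}$, and combine the ascent lemma on the interior with the two boundary estimates (a leading bare power $w_{\lambda}^{\,k}$, and the first step when the rightmost syllable is $x$, for which $x\alpha\in\mathcal D\subseteq R$) to conclude $\|g\alpha\|\ge\|\alpha\|$ for all $g$, with equality only for $g\in\{1,x\}$. Hence every $\alpha\in\mathcal D$ is orbit-minimal.

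The hard part is this last bookkeeping: ensuring the iterate stays inside $R$ across the alternation of $x$ and translations, and recognising that the \emph{symmetric} bound $b,c\ge\varepsilon|a|$ (not merely $c\ge\varepsilon|a|$) is essential---otherwise $x$ followed by a short translation could lower $|a|$. It remains to produce the family; I would take $\alpha_j=(j^2+j-n,\ (j+1)^2-n,\ j^2-n)$, for which $c=a-j$ and $a^2-n=c\,(a+j+1)$ give $bc=a^2-n$, all coordinates are positive once $j$ is large, and $b/a,c/a\to 1$, so $\alpha_j\in\mathcal D$ for large $j$. Since $\|\alpha_j\|=j^2+j-n$ is strictly increasing, the $\alpha_j$ are orbit-minima with pairwise distinct minimal values, hence lie in pairwise distinct orbits, and $H(\lambda)$ has infinitely many orbits for $\lambda\ge 3$. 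Together with the first implication this establishes the equivalence.
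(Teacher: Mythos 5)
Your proposal is correct and follows essentially the same route as the paper: for $\lambda\in\{1,2\}$ the same division-with-remainder descent into the finite set $B(n)$, and for $\lambda\geq 3$ the same ascent lemma (the paper's Lemma 1.2) applied to the same quadratic family (your $\alpha_j$ is the paper's $\alpha_s$ with $s=j+1$, up to swapping $b$ and $c$), yielding orbit-minimal elements with pairwise distinct norms. The only cosmetic differences are that you treat $\lambda=1$ and $\lambda=2$ uniformly via the bound $|a'|\leq\tfrac{\lambda}{2}|c|$ instead of reducing to the subgroup $H(2)\leq H(1)$, and your block-by-block word analysis with the symmetric region $b,c\geq\varepsilon|a|$ is a more explicit rendering of the paper's cases (i)--(iv) together with its remark that $\alpha_s$ and $x\alpha_s$ both satisfy the hypotheses of Lemma 1.2.
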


\begin{proof}
Assume $\lambda\geq 3$. For any integer $s\geq 1$ with $(s-1)^2>n$, we let 
$$a_s:=s^2-s-n,\; b_s:=(s-1)^2-n,\; c_s:=s^2-n,\; \alpha_s:=(a_s,b_s,c_s).$$
Note that $a_s^2-n = b_sc_s$, hence $\alpha_s\in A(n)$. Since 
$$\lim_{s\to \infty} \frac{a_s}{c_s} =\lim_{s\to \infty} \frac{a_s}{b_s} = 1,$$
it follows that there exists an integer $s_0\geq 1$ with $(s_0-1)^2>n$, such that, for any $s\geq s_0$, we have $b_s>\varepsilon a_s$ 
and $c_s>\varepsilon a_s$, where $\varepsilon=\frac{5-\sqrt 7}{3}$.

Let $s>t\geq s_0$ be two integers such that $(t-1)^2>n$.

We claim that $H(\lambda)\alpha_s \neq H(\lambda)\alpha_{t}$.
In order to do that, we show that $\alpha_t\notin H(\lambda)\alpha_s$.
An element $\beta$ of the orbit $H(\lambda)\alpha_s$ is of the form $g\alpha_s$ with $g\in H(\lambda)$.
Since $H(\lambda)$ is generated by $x$ and $w_{\lambda}$, $g$ has one of the forms
\begin{align*}
& (i) g=xw_{k_1}xw_{k_2}x\cdots xw_{k_j}\\
& (ii) g=xw_{k_1}xw_{k_2}x\cdots xw_{k_j}x\\ 
& (iii) g=w_{k_1}xw_{k_2}x\cdots xw_{k_j}\\
& (iv) g=w_{k_1}xw_{k_2}x\cdots xw_{k_j}x,
\end{align*}
where $k_i$'s are nonzero integers, multiples of $\lambda$, and $j\geq 0$ is an integer.

Since $||x(\alpha)||=||\alpha||$ for any $\alpha\in A(n)$, it is enough to tackle the cases $(i)$ and $(ii)$. 
Note that $\alpha_s$ and $x\alpha_s$ satisfy the conditions of Lemma $1.2$. Hence, by applying Lemma $1.2$, it follows
that 
\begin{align*}
& ||(xw_{k_1}xw_{k_2}x\cdots xw_{k_j})(\alpha_s)||\geq a_s \\
& ||(xw_{k_1}xw_{k_2}x\cdots xw_{k_j}x)(\alpha_s)||\geq a_s. 
\end{align*}
Therefore, in all cases (i-iv), we have $||g(\alpha_s)||\geq ||\alpha_s||$. Hence $\alpha_t\notin H(\lambda)\alpha_s$, 
as required.

From the above argument, it follows that the orbits $H(\lambda)\alpha_{s_0}$, $H(\lambda)\alpha_{s_0+1}$, $H(\lambda)\alpha_{s_0+2}, \ldots$ are
distinct, hence we have an infinite number of orbits.

In order to complete the proof, as $H(2)$ is a subgroup of $H(1)$, it is enough to show that the action of $H(2)$ on $A(n)$ has a finite
number of orbits. Let $\alpha=(a,b,c)\in A(n)$. We claim that there exists $g\in H(2)$ such that $g(\alpha)\in B(n)$, where $B(n)$
was defined in the statement of Lemma $1.3$. We use the following algorithm:
\begin{enumerate}
 \item We let $g=1$.
 \item If $\alpha\in B(n)$, then we stop. 
 \item If $|a|>|c|$ then, according to the remainder theorem, there exists an integer $k\neq 0$ such that $-|c| <a+2kc \leq |c|$, that is $|a+2kc|\leq |c|$.
       We let $$\alpha':=w_{2k}(\alpha)=(a',b',c'),\text{ where } a'=a+2kc,\; b'=b+2ka+4k^2c \text{ and }c'=c.$$
       We have $|a'|=|a+2kc|\leq |c| < |a|$ and we replace $\alpha$ with $\alpha'$ and $g$ with $w_{2k}g$. We return to 2.
 \item If $|a|>|b|$ then, we let $\alpha'=x(\alpha)=(-a,c,b)$. We replace $\alpha$ with $\alpha'$ and $g$ with $xg$. We return to 3.
\end{enumerate}
Since, in the step 3., $||\alpha'||=|a'|<|a|=||\alpha||$, the above procedure eventually stop, and we obtain $g\in H(2)$ with $g(\alpha)\in B(n)$, as required.
Since, according to Lemma $1.3$, the set $B(n)$ is finite, it follows that the set of orbits of $H(2)$ is also finite.
\end{proof}


\begin{teor}
The number $o(n)$ of orbits of the action of $H(2)$ on $\mathbb Q^*(n)$ is:
$$ o(n) \leq \begin{cases} \sum_{j=0}^{\left\lceil \sqrt{\frac{n}{2}}-1 \right\rceil} d(n-j^2) , & n<0 \\
                           \sum_{j=1}^{n} d(j^2-n),& n>0\end{cases},$$
where $d(n)=$ the number of integer divisors of $n$.
\end{teor}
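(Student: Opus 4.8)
The plan is to build directly on the reduction obtained inside the proof of Theorem~$1.4$: every orbit of $H(2)$ on $A(n)$ meets the finite set $B(n)=\{(a,b,c)\in A(n):|a|\le|b|,\ |a|\le|c|\}$. Thus $o(n)\le |B(n)|$, and everything reduces to bounding the cardinality of $B(n)$. Since the quantity we are after is an upper bound, any over-counting along the way is harmless.

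First I would sharpen this using the involution $x\in H(2)$, which by \eqref{15} acts by $x(a,b,c)=(-a,c,b)$. As $x$ merely negates $a$ and transposes $b$ and $c$, it maps $B(n)$ onto itself, and since $x\in H(2)$ the triples $(a,b,c)$ and $(-a,c,b)$ always lie in one orbit. Hence each orbit admits a representative in $B(n)$ with $a\ge 0$, so that
$$o(n)\le \#\{(a,b,c)\in B(n):a\ge 0\}.$$
I expect this symmetry reduction, together with the endpoint bookkeeping below, to be the only point requiring real care: for $a=0$ the map $x$ fixes the first coordinate and only swaps $b,c$, so the $a=0$ layer is in fact counted with a factor-of-two redundancy, but retaining all of it keeps the estimate an upper bound.

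Next, for a fixed admissible $a$ I would count the triples directly from the defining relation $bc=a^2-n$. Choosing $b$ to be any integer divisor of $a^2-n$ determines $c=(a^2-n)/b$, so the pairs $(b,c)$ are in bijection with the integer divisors of $a^2-n$, of which there are $d(a^2-n)=d(n-a^2)$; here $a^2-n\neq 0$ because $n$ is not a perfect square. Imposing $|a|\le|b|$ and $|a|\le|c|$ only removes some of these pairs, so for each $a$ the number of admissible triples is at most $d(a^2-n)$.

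Finally I would insert the range of $a$ supplied by Lemma~$1.3$. For $n>0$ that lemma yields $a^2\le n/2$, so $a$ ranges over the integers in $[0,\sqrt{n/2}\,]$, and summing the per-$a$ bound $d(n-a^2)$ gives the first estimate; for $n<0$ it yields $|a|\le -n$, and summing $d(a^2-n)$ over $a=0,1,\dots,-n$ gives the second. The remaining work is purely the conversion of the real constraint $a\le\sqrt{n/2}$ into the correct integer upper limit, i.e.\ into the stated ceiling expression, and the matching of the summation endpoints; the substantive content lies entirely in the orbit reduction via $x$ and in the divisor count.
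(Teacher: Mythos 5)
Your proposal is correct in substance and follows essentially the same route as the paper's proof: reduction of every orbit to the finite set $B(n)$ (via the algorithm in the proof of Theorem~1.4), use of the involution $x(a,b,c)=(-a,c,b)$ to pass to representatives with $a\ge 0$, a divisor count of the pairs $(b,c)$ for each fixed $a$, and the range of $a$ imported from Lemma~1.3. The paper merely packages the symmetry step differently: it splits $B(n)$ into $B^+(n)\cup B^0(n)\cup B^-(n)$, computes $|B^0(n)|=d(n)$ exactly, and writes $o(n)\le |B^+(n)|+d(n)$, which coincides with your count including the $j=0$ term. You also, correctly, read the two displayed formulas with the case labels $n<0$ and $n>0$ interchanged and with the corrected upper limit $|n|$ in the second sum; these are evident typos in the statement.

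The one detail you declare to be routine is in fact the only point where the arguments diverge. For $n>0$ your admissible range for $a$ is the set of integers in $[0,\sqrt{n/2}\,]$, whose largest element is $\lfloor\sqrt{n/2}\rfloor$, whereas the stated upper summation limit is $\lceil\sqrt{n/2}-1\rceil$; these coincide unless $\sqrt{n/2}$ is an integer, which for square-free $n$ happens exactly when $n=2$. In that case your method yields the bound $d(2)+d(1)$, strictly weaker than the stated $d(2)$. The paper closes this gap by asserting $a<\sqrt{n/2}$ strictly, ``otherwise $bc=0$,'' but that justification is false: if $a^2=n/2$ then $bc=a^2-n=-n/2\neq 0$, and indeed $(1,1,-1)\in B^+(2)$ shows the strict inequality fails for $n=2$. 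So your proof establishes the theorem verbatim for every square-free $n\neq 2$, and for $n=2$ it proves a marginally weaker but correctly derived bound, while the paper's own treatment of that single case contains an error.
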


\begin{proof}
We consider the sets:
\begin{align*}
& B^+(n)=\{(a,b,c)\in B(n)\;|\; a>0\}\\
& B^0(n)=\{(a,b,c)\in B(n)\;|\; a=0\}\\
& B^-(n)=\{(a,b,c)\in B(n)\;|\; a<0\}
\end{align*}
Obviously, $|B^+(n)|=|B^-(n)|$ and $|B^0(n)|=|\{(0,b,c)\;|\;bc=-n\}|=d(n)$. 
According go the proof of Theorem $1.4$, for any $\alpha=(a,b,c)\in A(n)$, there exists a $g\in H(2)$ such that 
$g(\alpha)\in B(n)$. Note that, if $g(\alpha)\in B^-(n)$, then $xg(\alpha)\in B^+(n)$. It follows that
\begin{equation}\label{cuur}
 o(n)\leq |B^+(n)|+d(n).
\end{equation}
Let $(a,b,c)\in B^+(n)$.
We consider two cases:
\begin{enumerate}
 \item[(i)] $n<0$. As in the proof of Lemma $1.3$, we have $a\leq -n$. Since $bc=a^2-n$, it follows that $b$ is a divisor of $a^2-n$, hence $b$ can have $d(a^2-n)$ possible values.
            Also, $c$ is uniquely determined by $a$ and $b$. It follows that 
            \begin{equation}\label{cuur1}
            |B^+(n)| = \sum_{j=1}^{n} d(j^2-n).
            \end{equation}
 \item[(ii)] $n>0$. As in the proof of Lemma $1.3$, we have $a\leq \sqrt{\frac{n}{2}}$. In fact $a<\sqrt{\frac{n}{2}}$, otherwise $bc=0$ hence $b=0$ or $c=0$,
            a contradiction. Since $bc=a^2-n$, it follows that $b$ is a divisor of $n-a^2$, hence $b$ can have $d(n-a^2)$ possible values. Also, $c$ is uniquely determined by $a$ and $b$.
            It follows that 
\begin{equation}\label{cuur2}
|B^+(n)| = \sum_{j=1}^{\left\lceil \sqrt{\frac{n}{2}}-1 \right\rceil} d(n-j^2).
\end{equation}

\end{enumerate}
The conclusion follows from \eqref{cuur}, \eqref{cuur1} and \eqref{cuur2}. 
\end{proof}

\begin{obs}
 \emph{The inequality in Theorem $1.5$ is strict. Let $\alpha:=(1,-3,2)\in B^+(7)\subseteq A(7)$. We have $w_{-2}(\alpha)=(-3,1,2)$, $xw_{-2}(\alpha)=(3,2,1)$ and
$w_{-2}xw_{-2}(\alpha)=(1,-6,1)\in B^+(7)$.}

\emph{Let $\alpha:=(2,3,2)\in B^+(-2)\subset A(-2)$. We have $w_{-2}(\alpha)=(-2,3,2)$ and therefore $xw_{-2}(\alpha)=(2,2,3)\in B^+(-2)$.}
\end{obs}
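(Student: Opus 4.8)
The statement asserts that the bound of Theorem 1.5 is strict, substantiated by two explicit examples (one with $n=7>0$ and one with $n=-2<0$). The plan is first to isolate the combinatorial reason the bound can fail to be sharp, and then to verify that the displayed computations realize that failure. Recall from the proof of Theorem 1.5 that the estimate \eqref{cuur}, $o(n)\leq |B^+(n)|+d(n)$, comes from the surjection sending each element of $B^+(n)\cup B^0(n)$ to its $H(2)$-orbit: every orbit is represented there, so the number of orbits is at most the size of this set. Hence the inequality is strict exactly when that assignment is \emph{not} injective, i.e.\ when two distinct elements of $B^+(n)\cup B^0(n)$ share a single orbit. The whole task therefore reduces to exhibiting, for some $n$, two distinct members of $B^+(n)$ lying in a common $H(2)$-orbit, and the two examples do this on both sides of the sign of $n$.

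The core of the verification uses the explicit action formulas established in Proposition 1.1 and recorded in \eqref{15}, namely $x(a,b,c)=(-a,c,b)$ together with $w_\lambda(a,b,c)=(a+\lambda c,\;2\lambda a+b+\lambda^2 c,\;c)$, specialized to $\lambda=-2$ (note $w_{-2}=w_2^{-1}\in H(2)$). For $n=7$, I would start from $\alpha=(1,-3,2)$ and apply in turn $w_{-2}$, then $x$, then $w_{-2}$, tracking the triple at each stage to confirm the claimed intermediate values $(-3,1,2)$ and $(3,2,1)$ and the terminal value $(1,-6,1)$. For $n=-2$, I would start from $\alpha=(2,3,2)$ and apply $w_{-2}$ followed by $x$, confirming $(-2,3,2)$ and then $(2,2,3)$. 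Since each composite word lies in $H(2)$, the endpoint sits in the same orbit as the start.

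The only care required, and the nearest thing to an obstacle, is clean bookkeeping: at each stage I would check membership in $A(n)$ by confirming $bc=a^2-n$, and I would verify that both the initial and terminal triples satisfy $a>0$, $|a|\leq|b|$, and $|a|\leq|c|$, so that they genuinely lie in $B^+(n)$. In the first example the two $B^+(7)$ members $(1,-3,2)$ and $(1,-6,1)$ are distinct (same $a$, different $b,c$); in the second, $(2,3,2)$ and $(2,2,3)$ are distinct (the roles of $b$ and $c$ are swapped). In each case two distinct elements of $B^+(n)$ share an orbit, so the orbit-assignment map is non-injective for that $n$, and the inequality $o(n)\leq|B^+(n)|+d(n)$ is strict, as claimed.
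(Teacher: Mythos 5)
Your proposal is correct and follows essentially the same route as the paper: the remark's content is exactly the direct computation of the orbits of $(1,-3,2)\in B^+(7)$ and $(2,3,2)\in B^+(-2)$ under words in $x$ and $w_{-2}=w_2^{-1}\in H(2)$, using the formulas $x(a,b,c)=(-a,c,b)$ and $w_\lambda(a,b,c)=(a+\lambda c,\,2\lambda a+b+\lambda^2 c,\,c)$ from \eqref{15}, and all your intermediate triples and $B^+(n)$-membership checks agree with the paper's. Your added framing --- that strictness of \eqref{cuur} follows because the surjection from $B^+(n)\cup B^0(n)$ onto the orbit set fails to be injective --- is left implicit in the paper but is precisely the intended reading.
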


{}

\vspace{2mm} \noindent {\footnotesize
\begin{minipage}[b]{15cm}
Mircea Cimpoea\c s, Simion Stoilow Institute of Mathematics, Research unit 5, P.O.Box 1-764,
Bucharest 014700, Romania and University Politehnica of Bucharest, Faculty of
Applied Sciences, Department of Mathematical Methods and Models, Bucharest, 060042, Romania.\\ 
 E-mail: mircea.cimpoeas@imar.ro
\end{minipage}}

\end{document}